\newtheorem{theorem}{Theorem}[section]
\newtheorem{corollary}[theorem]{Corollary}
\newtheorem{proposition}[theorem]{Proposition}
\newtheorem{lemma}[theorem]{Lemma}
\def \bN {\mathbb N}
\def \bR {\mathbb R}
\def \cF {\mathcal F}
\def \cL {\mathcal L}
\def \fh {\mathfrak h}
\def \fn {\mathfrak n}
\def \fv {\mathfrak v}
\def \fz {\mathfrak z}
\def \Ad {\text{\rm Ad}}
\def\and{\quad\mbox{and}\quad}
\def\where{\quad\mbox{where}\quad}
\begin{document}

\title[Heisenberg oscillator]
{The Heisenberg oscillator}
\author{V. Fischer}

\address{V\'eronique Fischer\\
King's College London\\ 
Strand\\
 London WC2R 2LS, UK}
\email{veronique.fischer@kcl.ac.uk}
\thanks{The author thanks the London Mathematical Society and King's College London for support.}

\keywords{nilpotent Lie groups, harmonic oscillator, representation of nilpotent Lie groups}

\begin{abstract}
In this short note, we determine the spectrum of the Heisenberg oscillator which is the operator defined as $L+|x|^2+|y|^2$ on the Heisenberg group $H_1=\bR^2_{x,y}\times \bR$ where $L$ stands for the positive sublaplacian. 
\end{abstract}

\maketitle

\section{Introduction}
The quantum harmonic oscillator on the real line:
$$
 -\partial_x^2 + x^2 
\ ,
$$
 is intimately linked with the three-dimensional real Heisenberg algebra $\fh_1$.
Indeed on the one hand the operators 
of derivation $\partial_x$ and of multiplication by $ix$ 
generate the Heisenberg Lie algebra
since their commutator 
$[\partial_x, ix ] = i$ is central;
on the other hand $-(-\partial_x^2 + x^2 )$ is the sum of the square of these two operators.

This has the following well known consequences for 
the Heisenberg group
 $H_1=\bR^2\times \bR$
 whose law is chosen here as:
 $$
 (x,y,t)(x',y',t')=(x+x',y+y',t+t'+\frac{xy'-x'y}2)
 \ .
 $$
Let $X$, $Y$ and $T$ be the three elements of $\fh_1$ forming the canonical basis of $\fh_1$;
it satisfies $[X,Y] = T$.
We identify the elements of $\fh_1$ with left invariant vector fields on $H_1$
and we define the sublaplacian: $L=-(X^2+Y^2)$.
Let $\tau$ be the representation of $H_1$ on $L^2(\bR)$ such that
$$
d\tau(X)= \partial_x
\quad,\quad
d\tau(Y)=ix
\quad\mbox{and necessarily}\quad
d\tau (T)= i 
\ .
$$
Then $\tau$ is the well known unitary irreducible Schr\"odinger representation of $H_1$ corresponding to the central character $t\mapsto e^{it}$.
Furthermore 
$$
d\tau (L)= -\partial_x^2 + x^2  \ .
$$
The spectrum of the quantum harmonic is well known
and 
this last equality allows to describe the spectrum of $L$.

In this short note,
we reverse the line of approach described above
to study the following unbounded operator  on $L^2(H_1)$:
$$
L + x^2+y^2 = -(X^2 +Y^2) +x^2 +y^2 \ ;
$$
we call this operator \emph{the Heisenberg oscillator}.
Our main result is the determination of its spectrum.

This study could very easily be generalised to the $(2n+1)$-dimensional Heisenberg group.

\medskip

In fact we will study the operator $L + \lambda_2^2(x^2+y^2)$ 
for $\lambda_2\not=0$, 
even if  by homogeneity it would suffice to study the case $\lambda_2=1$.

In  the Heisenberg oscillator 
the central variable of $H_1$ appears only 
as derivatives in the expression of the  vector fields
\begin{equation}
\label{expr_X_Y}
X=\partial_x - \frac y 2 \partial_t
\and
Y=\partial_y +\frac x 2 \partial_t
\ .
\end{equation}
This motivates our choice to study the Heisenberg oscillator 
intertwined with the Fourier transform $\cF_{\lambda_1}$ 
in the central variable of $H_1$:
\begin{equation}
\label{F_lambda1}
\cF_{\lambda_1} f(x,y)=\int_\bR e^{-i\lambda_1 t} f(x,y,t) dt
\ .
\end{equation}
Hence the object at the centre of this paper is
\begin{equation}
\label{op_intertwined_cF}
\cF_{\lambda_1} \big(L +\lambda_2^2 (x^2+y^2) \big)\cF_{\lambda_1}^{-1}
\end{equation}
where $\lambda=(\lambda_1,\lambda_2)$ with $\lambda_2\not=0$.

\medskip

The result of this note gives a complete description of the spectrum of  the operator \eqref{op_intertwined_cF}
which can also be viewed as a magnetic Schr\"odinger operator with quadratic potential. 
Some of the properties of the spectrum of that type of operators are already known by specialists of this domain (see for example \cite{hansson}) and coincide with our explicit description in the particular case of the operator \eqref{op_intertwined_cF}.
In the future the result of this note will allow the study of a Mehler type formula for the operator given by \eqref{op_intertwined_cF}, of the $L^p$-multipliers problem and of Strichartz estimates for the Heisenberg oscillator $L +(x^2+y^2)$.

\medskip

This paper is organised as follows.
First we construct a six-dimensional nilpotent Lie group $N$ 
and a representation $\rho_\lambda$ of $N$ such that 
the image of 
 the canonical sublaplacian $\cL$ of $N$ through $\rho_\lambda$
 is given by \eqref{op_intertwined_cF}.
In the third section
we study more systematically the representations of $N$ via the orbit method
and the diagonalisation of the image of $\cL$.
It allows us in the fourth section to go back to the study of the Heisenberg oscillator.
In a last section, we obtain a Mehler type formula for the operator given by \eqref{op_intertwined_cF}. 

\medskip

\textbf{Acknowledgement:}
The author is very grateful to Professors Fulvio Ricci and Ari Laptev for insightful discussions.

\section{The nilpotent Lie group\\ associated with the Heisenberg oscillator}

\subsection{The group $N$}
We consider the unbounded operators on $L^2(H_1)$
given by
the left-invariant vector fields $X$ and $Y$ (see \eqref{expr_X_Y}) and the multiplications by $ix$ and $iy$.
They generate a six-dimensional real Lie algebra 
$$ 
\fn := \bR X_1 \oplus \bR Y_1 \oplus \bR X_2 \oplus \bR Y_2 
\oplus \bR T_1 \oplus \bR T_2
\ ,
$$
whose canonical basis satisfies the commutator relations
$$
[X_1,Y_1]=T_1
\quad , \quad
[X_1,X_2]=[Y_1,Y_2]=T_2
\ ,
$$
with all the other commutators vanishing (beside the ones given by skew-symmetry).
Hence $\fn$ is a well defined two-step nilpotent Lie algebra.
It  is stratified \cite{folland_stein}
since we can decompose:
$$
\fn = \fv \oplus \fz
 \ ,
 $$
 where
the subspace
$$
\fv:=\bR X_1 \oplus \bR Y_1 \oplus \bR X_2 \oplus \bR Y_2 
\ ,
$$
generates the Lie algebra $\fn$
and the subspace
$$
\fz:=  \bR T_1 \oplus \bR T_2
\ ,
$$ 
is the centre of $\fn$.

The connected simply connected nilpotent Lie group associated with $\fn$
is $N$ identified with $\fv\times \fz\sim \bR^6$
using exponential coordinates.
 Hence $N$ is endowed with the group law
$$
(v,z)(v',z')=(v+v',z")
$$
where, for 
$v=(x_1,y_1,x_2,y_2)$,
$v=(x'_1,y'_1,x'_2,y'_2)$,
$z=(z_1,z_2)$ and $z'=(z'_1,z'_2)$, we have:
$$
z"=(z_1 +z'_1+\frac{x_1y'_1-x'_1y_1}2,
z_2+z'_2 +\frac{x_1x'_2-x_2x'_1}2 + \frac{y_1 y'_2 -y_2 y'_1}2
)
\ .
$$

We identify the elements of $\fn$ with left invariant vector fields on $N$.
We denote by 
\begin{equation}
\label{def_cL}
\cL:= -(X_1^2+Y_1^2+X_2^2+Y_2^2)
\ ,
\end{equation}
the canonical sublaplacian of $N$.

\subsection{The representation $\rho_\lambda$}
Let $\lambda=(\lambda_1,\lambda_2)$ with $\lambda_2\not=0$.
We consider the representation $d\rho_\lambda$ of the Lie algebra $\fn$
over $L^2(\bR^2)$
defined by:
\begin{equation}
\label{drho}
\left\{
\begin{array}{rcl}
d\rho_\lambda (X_1)
&=& 
\cF_{\lambda_1} X \cF_{\lambda_1}^{-1} 
=
\partial_x -i\frac y 2 \lambda_1
\\
d\rho_\lambda (Y_1)
&=& 
\cF_{\lambda_1} Y \cF_{\lambda_1}^{-1} 
=
\partial_y+ i\frac x 2 \lambda_1
\\
d\rho_\lambda (X_2) 
&=& i\lambda_2 x
\qquad
 d\rho_\lambda (Y_2) \ = \ i\lambda_2 y
 \\
d\rho_\lambda (T_1) &=& i\lambda_1 
\hspace{2.5em}
 d\rho_\lambda (T_2) \ = \ i\lambda_2 
\end{array}\right.
 \qquad .
\end{equation}

Throughout this paper, $L^2(\bR^2)$ is endowed with its natural Hilbert space structure whose Hermitian product is given by:
$$
\big(f,g\big)_{L^2(\bR^2)}
=
\int f(x,y) \bar g(x,y) dx dy
\ .
$$

It is not difficult to compute that $d\rho_\lambda$ is the infinitesimal representation of the unitary representation $\rho_\lambda$ of $N$ 
on $L^2(\bR^2)$ 
given by:
$$
\rho_\lambda(v,z)f(x,y)=
e^{i\lambda_1 (z_1+\frac { xy_1-x_1y}2)
+i\lambda_2 (z_2 + xx_2 +yy_2 +\frac{x_1x_2}2 +\frac{y_1y_2}2)}
f(x+x_1,y+y_1)
\ ,
$$
where $f\in L^2(\bR^2)$,
$(x,y)\in \bR^2$,
$(v,z)\in N$ with $v=(x_1,y_1,x_2,y_2)$ and $z=(z_1,z_2)$.

By \eqref{drho} the image of the canonical sublaplacian $\cL$ of $N$ 
(see \eqref{def_cL}) through $\rho_\lambda$ is:
\begin{equation}
\label{drho(cL)}
d\rho_\lambda(\cL) = 
\cF_{\lambda_1} \big(L +\lambda_2^2 (x^2+y^2) \big)\cF_{\lambda_1}^{-1}
\ .
\end{equation}

In the next section, we will show that $\rho_\lambda$ is equivalent to an irreducible unitary representation $\pi_\lambda$ and we will diagonalise $\pi_\lambda(\cL)$.

\section{The representations of $N$}

In this section, after describing all the unitary irreducible representations of $N$ using the orbit method \cite{corwin_greenleaf},
we obtain a diagonalisation of $\rho_\lambda(\cL)$.

\subsection{All the representations of $N$}
We need to describe the orbits of $N$ acting on the dual $\fn^*$ of $\fn$ by the dual of the adjoint action.
Each element of $\fn^*$ will be written as $\ell=(\omega,\lambda)$ 
where $\omega$ and $\lambda$ are linear forms on $\fv$ and $\fz$ respectively,
identified with a vector of $\fv$ and $\fz$ by the canonical scalar products of these two spaces.
It is not difficult to determine representatives of the co-adjoint orbits:
\begin{lemma}
\label{lem_coadjoint_representatives}
Each co-adjoint orbit of $N$ admits exactly one representative of the form
$\ell=(\omega,\lambda)$
with $\lambda=(\lambda_1,\lambda_2)$ 
satisfying
\begin{itemize}
\item[(i)]  $\lambda_2\not=0$ and $\omega=0$
\item[(ii)]  $\lambda_2=0$, $\lambda_1\not=0$, $\omega\in \bR X_2 \oplus \bR Y_2$
\item[(iii)]  $\lambda_1=\lambda_2=0$ and any $\omega$.
\end{itemize}
\end{lemma}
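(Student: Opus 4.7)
The plan is to compute the coadjoint action explicitly and then carry out a straightforward case analysis on $(\lambda_1,\lambda_2)$.

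First I would remark that since $T_1,T_2$ span the centre $\fz$, the $\lambda$-component of any $\ell=(\omega,\lambda)\in\fn^*$ is automatically preserved by the coadjoint action: for any $u\in\fn$ and $n\in N$, the element $\Ad(n)u-u$ lies in $[\fn,\fn]\subset\fz$, hence annihilates nothing but is itself central, so $\lambda_1$ and $\lambda_2$ are coadjoint invariants. In particular the three sub-cases (i)--(iii) partition $\fn^*$ according to the values of $(\lambda_1,\lambda_2)$, and it will suffice to analyse each sub-case separately.

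Next I would compute $\Ad^*(\exp v)\ell$ for $v=aX_1+bY_1+cX_2+dY_2\in\fv$. Because $\fn$ is $2$-step nilpotent, $\Ad(\exp v)=I+\ad(v)$, and from the bracket relations $[X_1,Y_1]=T_1$, $[X_1,X_2]=[Y_1,Y_2]=T_2$ one reads off
\begin{align*}
\bigl(\Ad^*(\exp v)\ell\bigr)(X_1)&=\omega(X_1)+b\lambda_1+c\lambda_2,\\
\bigl(\Ad^*(\exp v)\ell\bigr)(Y_1)&=\omega(Y_1)-a\lambda_1+d\lambda_2,\\
\bigl(\Ad^*(\exp v)\ell\bigr)(X_2)&=\omega(X_2)-a\lambda_2,\\
\bigl(\Ad^*(\exp v)\ell\bigr)(Y_2)&=\omega(Y_2)-b\lambda_2,
\end{align*}
while the $T_i$-components are left unchanged. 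This linear system is transparent enough that the rest of the argument reduces to choosing $a,b,c,d$.

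Then I would dispatch the three cases. If $\lambda_2\neq 0$ (case (i)), I would pick $a=\omega(X_2)/\lambda_2$ and $b=\omega(Y_2)/\lambda_2$ to kill the $X_2,Y_2$ components, then solve for $c,d$ to kill the $X_1,Y_1$ components; this gives existence, and uniqueness is automatic because $\omega=0$ has no other candidate in the orbit. If $\lambda_2=0$ and $\lambda_1\neq 0$ (case (ii)), the action fixes $\omega(X_2),\omega(Y_2)$ and allows independent adjustment of $\omega(X_1),\omega(Y_1)$ via $a,b$, so exactly one representative lies in $\bR X_2\oplus \bR Y_2$, with its $X_2,Y_2$-coordinates invariant. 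If $\lambda_1=\lambda_2=0$ (case (iii)), the coadjoint action is trivial and every $\omega$ is its own orbit.

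Since the group $N$ is connected and simply connected and generated by exponentials of $\fv$ (together with the central $\exp\fz$, which acts trivially by the previous formulas), these reductions exhaust the full coadjoint action, so no obstacle arises beyond bookkeeping. The only mild care-point is verifying uniqueness in case (ii): it follows because $\omega(X_2)$ and $\omega(Y_2)$ are coadjoint invariants in this stratum, hence distinguish orbits within $\bR X_2\oplus\bR Y_2$.
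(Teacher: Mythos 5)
Your proposal is correct and follows essentially the same route as the paper: your explicit component formulas for $\Ad^*(\exp v)\ell$ are exactly the coordinate form of the paper's map $j_\lambda$ (whose range the paper computes via the matrix of $j_z$ and its determinant $z_2^4$), and the case analysis on $(\lambda_1,\lambda_2)$ with the invariance of $\lambda$ is the same. The only blemish is the garbled justification of the invariance of $\lambda$ ("annihilates nothing but is itself central"); the clean statement is simply that $\Ad(n)$ fixes the central elements $T_1,T_2$, which your explicit formulas confirm anyway.
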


\begin{proof}[Sketch of the proof]
For each $z\in \fz$,
let $j_z$ be the endomorphism of $\fv$ given by:
$$
\langle j_z(v) ,v'\rangle_{\fv} = \langle z,[v,v']\rangle_{\fz}
\quad,\quad
v,v'\in \fv
\ ,
$$
where $\langle, .\rangle_{\fv}$
and $\langle, .\rangle_{\fz}$
 denote the canonical scalar products on $\fv$ and $\fz$ respectively.
In the canonical basis $\{X_1,Y_1,X_2,Y_2\}$ of $\fv$, 
the endomorphism $j_z$ is represented by:
$$
\begin{pmatrix}
0&z_1&z_2&0\\
-z_1&0&0&z_2\\
-z_2&0&0&0\\
0&-z_2&0&0
\end{pmatrix}
\quad\mbox{whose determinant is}\ z_2^4 \ .
$$
So the the range of $j_z$ is $\fv$ if $z_2\not=0$,
$\bR X_1\oplus \bR Y_1$ if  $z_2=0$ but $z_1\not=0$.

As the nilpotent Lie group $N$ is of step two, 
we compute easily for $\ell=(\omega,\lambda)$ and $n=(v_o,z_o)\in N$:
$$
\ell \circ \Ad(n^{-1}) = (\omega+ j_\lambda(v_o), \lambda)
\ ,
$$
and the previous paragraph completes the proof.
\end{proof}

It is a routine exercise to compute a representation associated with a linear form and we just give here the end result for the linear forms $\ell$ described in Lemma \ref{lem_coadjoint_representatives}.

Let $\lambda=(\lambda_1,\lambda_2)$ with $\lambda_2\not=0$ 
as in (i) of Lemma \ref{lem_coadjoint_representatives}.
The representation  $\pi_\lambda$ of $N$ over $L^2(\bR^2)$ 
given by:
\begin{eqnarray*}
\pi_\lambda(v,t) h (u_1,u_2)
&=&
e^{i\lambda_1 (t_1 + u_1y_1 +\frac{x_1y_1}2)
+i\lambda_2 (t_2 +u_1 x_2 - u_2y_1 + 
\frac {x_1x_2 }2  -\frac{y_1y_2}2)}
\\
&&\qquad
h (x_1+u_1, y_2+u_2)
\ ,
\end{eqnarray*}
 is the irreducible unitary representation associated with the linear form given by $\lambda$
(for the polarisation $\bR Y_1 \oplus \bR X_2 \oplus \bR T_1 \oplus \bR T_2$).

 Let $\lambda_2=0$, $\lambda_1\not=0$, $\omega\in \bR X_2 \oplus \bR Y_2$ 
 as in (ii) of Lemma \ref{lem_coadjoint_representatives}.
 The representation  $\pi_{\lambda_1,\omega}$ 
 of $N$ over $L^2(\bR)$ 
given by:
\begin{eqnarray*}
\pi_{\lambda_1,\omega} (v,t) h (u)
=
\exp i\lambda_1 (t_1 + u_1y_1 +\frac12 x_1y_1)
\ \exp i \langle \omega , v \rangle
\ h (x_1+u)
\ ,
\end{eqnarray*}
 is the irreducible unitary representation associated with the linear form given by $(\omega,\lambda)$.

Let $\lambda_2=\lambda_1=0$ and $\omega\in \fv$ 
 as in (iii) of Lemma \ref{lem_coadjoint_representatives}.
 The character 
$$
e^{i \langle \omega , \cdot \rangle}:(v,t)\longmapsto \exp  i \langle \omega , v \rangle
\ ,
$$
gives the one-dimensional unitary representation 
associated with the linear form given by $\omega$.

By Kirillov's methods,
the representations $\pi_\lambda$, $\pi_{\lambda_1,w} $
and $e^{i \langle \omega , \cdot \rangle}$
exhaust all the irreducible unitary representations of $N$,
up to unitary equivalence.

\subsection{The representations $\pi_\lambda$ and $\rho_\lambda$}

Let us focus on the representations $\pi_\lambda$ 
with $\lambda=(\lambda_1,\lambda_2)$, $\lambda_2\not=0$.
Its infinitesimal representation is given by:
\begin{equation}
\label{dpi}
\left\{\begin{array}{rclcrcl}
d\pi_\lambda (X_1)
&=&
\partial_{u_1}
&\qquad&
d\pi_\lambda (Y_1)
&=&
i\lambda_1 u_1 -i\lambda_2 u_2
\\
d\pi_\lambda (X_2) &=& i\lambda_2 u_1
 & \qquad&
 d\pi_\lambda (Y_2) &=&\partial_{u_2}
 \\
d\pi_\lambda (T_1) &=& i\lambda_1
 &\qquad&
 d\pi_\lambda (T_2) &=& i\lambda_2 
\end{array}\right .
\quad .
\end{equation}

We can now go back to the study of the representation $\rho_\lambda$.
Its restriction to the centre gives the character $z\mapsto e^{i\lambda(z)}$;
so by Kirillov's method, we know that $\rho_\lambda$ is equivalent to one or several copies of $\pi_\lambda$, depending whether $\rho_\lambda$ is irreducible.
In fact it is not difficult to find a concrete expression for the intertwiner between $\rho_\lambda$ and $\pi_\lambda$
(see the proposition just below)
and this shows in particular that
 $\rho_\lambda$ is irreducible.

\begin{proposition}
\label{prop_rho_eq_pi}
For each $\lambda=(\lambda_1,\lambda_2)$, $\lambda_2\not=0$,
the representations $\rho_\lambda$ and $\pi_\lambda$ are unitarily equivalent.
More precisely, let $T_\lambda=T:L^2(\bR^2)\rightarrow L^2(\bR^2)$
be the unitary operator given by:
$$
Th(x,y)
=
\sqrt{ \frac { |\lambda_2| } {2\pi}} \
e^{i \frac {\lambda_1}2 x y}
\int_\bR e^{-i\lambda_2 y z} h(x,z) dz
\ .
$$
Then 
$$
T\pi_\lambda = \rho_\lambda T
\ .
$$
\end{proposition}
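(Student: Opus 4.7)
The plan is to verify the intertwining at the infinitesimal level and then exponentiate, since $N$ is connected and simply connected. Before that I would check that $T$ is unitary: $T$ is the composition of the unitary partial Fourier transform $h(x,z)\mapsto \hat h_2(x,\eta)=\int e^{-i\eta z} h(x,z)\,dz/\sqrt{2\pi}$, the change of variable $\eta=\lambda_2 y$ (which contributes the factor $\sqrt{|\lambda_2|}$), and the unitary modulation $g(x,y)\mapsto e^{i\lambda_1 xy/2}g(x,y)$. In particular $T$ preserves $\cS(\bR^2)$, which is contained in the spaces of smooth vectors of both $\pi_\lambda$ and $\rho_\lambda$.

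Next I would verify that $T\,d\pi_\lambda(V)=d\rho_\lambda(V)\,T$ on $\cS(\bR^2)$ for $V$ running through the canonical basis $X_1,Y_1,X_2,Y_2,T_1,T_2$ of $\fn$, by direct computation using formulas \eqref{drho} and \eqref{dpi}. The centre generators $T_1,T_2$ act as scalars $i\lambda_1,i\lambda_2$ on both sides, so the relation is trivial. For $X_2$ the operator $d\pi_\lambda(X_2)$ is multiplication by $i\lambda_2 u_1$ and $d\rho_\lambda(X_2)$ is multiplication by $i\lambda_2 x$; since $T$ acts only on the second variable (via the partial Fourier transform and the modulation, both of which commute with multiplication by the first coordinate), the identity is immediate. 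For $Y_2=\partial_{u_2}$, one uses the standard rule $\widehat{\partial_z h}_2(x,\eta)=i\eta\,\hat h_2(x,\eta)$, which at $\eta=\lambda_2 y$ yields exactly the multiplication by $i\lambda_2 y$ appearing in $d\rho_\lambda(Y_2)$.

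The two remaining cases involve the modulation $e^{i\lambda_1 xy/2}$ in a non-trivial way. For $X_1=\partial_{u_1}$, differentiation in $u_1$ passes under the $z$-integral and gives $\partial_x\hat h_2(x,\lambda_2 y)$; on the other side, applying $\partial_x-i\lambda_1 y/2$ to $Th$ produces an extra term $(i\lambda_1 y/2)Th$ from the chain rule on the modulation, which is cancelled precisely by the $-i\lambda_1 y/2$ of $d\rho_\lambda(X_1)$. For $Y_1=i\lambda_1 u_1-i\lambda_2 u_2$, the term $i\lambda_1 u_1$ is matched (after application of $T$) by the $i\lambda_1 x/2$ from $d\rho_\lambda(Y_1)$ together with the contribution of the modulation under $\partial_y$; the term $-i\lambda_2 u_2$ is transported by the Fourier rule $\widehat{zh}_2(x,\eta)=i\partial_\eta \hat h_2(x,\eta)$ into a $\partial_y$ term (via the substitution $\eta=\lambda_2 y$), which recombines with the remaining $\partial_y$ contribution on the right-hand side to produce the required equality. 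I expect this case to be the only slightly delicate computation, because both the modulation phase and the derivative of the Fourier transform enter simultaneously; once the signs are correctly tracked, everything cancels.

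Having established the infinitesimal intertwining on a common dense domain of smooth vectors, I would conclude by applying it to one-parameter subgroups $\exp(tV)$: by Stone's theorem $T\pi_\lambda(\exp tV)=\rho_\lambda(\exp tV)T$ as a bounded operator identity. Since $N$ is generated by the one-parameter subgroups corresponding to the canonical basis, the group-level identity $T\pi_\lambda=\rho_\lambda T$ follows. In particular $\rho_\lambda\sim\pi_\lambda$ and hence $\rho_\lambda$ is irreducible.
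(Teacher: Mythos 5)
Your proposal is correct and follows essentially the same route as the paper: the paper's proof likewise factors $T$ into the modulation $e^{i\frac{\lambda_1}{2}xy}$ and the scaled partial Fourier transform ($T=T_1T_2$) and verifies the intertwining on the canonical basis at the infinitesimal level using \eqref{drho} and \eqref{dpi}, leaving the passage to the group level as routine. Your explicit exponentiation step (Stone's theorem on the one-parameter subgroups, with $\cS(\bR^2)$ as a common invariant core) merely spells out that routine step, and your basis-by-basis cancellations, in particular for $X_1$ and $Y_1$, are the same computations the paper performs via conjugation by $T_1$ and then $T_2$.
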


\begin{proof}
The operator $T$ can be written as $T=T_1T_2$
where 
$T_1,T_2:L^2(\bR^2) \rightarrow L^2(\bR^2)$
are the unitary operators given by:
\begin{eqnarray*}
T_1f(x,y)
&=&
e^{i\frac {\lambda_1}2 xy}
f(x,y)
\ ,
\\
T_2f(x,v)
&=&
\sqrt{ \frac { |\lambda_2| } {2\pi}} 
\int_\bR e^{-i\lambda_2 v y} f(x,y) dy\ .
\end{eqnarray*}
The computations of the infinitesimal action on the canonical basis 
through
$\rho_\lambda^{(1)} = T_1^{-1}\circ \rho_\lambda \circ T_1$
and then $\rho_\lambda^{(2)} = T_2^{-1}\circ \rho^{(1)} _\lambda\circ T_2$
yield the result.
\end{proof}

\subsection{Diagonalisation of $d\pi_\lambda(\cL)$}
By \eqref{dpi}
 the image of the canonical sublaplacian through $\pi_\lambda$ is
the operator:
$$
d\pi_\lambda(\cL)
= 
-\partial_{u_1}^2 + \left(\lambda_1 u_1 -\lambda_2 u_2\right)^2
+\left(\lambda_2 u_1\right)^2 -  \partial_{u_2}^2\ ,
$$
for which we determine a diagonalisation basis.

We need to study the homogeneous polynomial of degree two:
\begin{equation}
\label{homogeneous_poly}
\left(\lambda_1 u_1 -\lambda_2 u_2\right)^2
+\left(\lambda_2 u_1\right)^2
=
u^t M_\lambda u
\ ,
\end{equation}
where
$$
u=\begin{pmatrix}u_1\\u_2\end{pmatrix}
\and
M_\lambda=
\begin{pmatrix}
\lambda_1^2 +\lambda_2^2 & -\lambda_1\lambda_2\\
-\lambda_1\lambda_2& \lambda_2^2
\end{pmatrix}
\ ,
$$
and this boils down to diagonalising the matrix $M_\lambda$.
We obtain:
$$
k^{-1}_\lambda M_\lambda k_\lambda 
=
\begin{pmatrix}
\mu_{+,\lambda} &0\\0&\mu_{-,\lambda}
\end{pmatrix}
$$
where
\begin{equation}
\label{def_mu}
\mu_{\epsilon,\lambda}
= \frac 12 \left( \lambda_1^2 +2\lambda_2^2 + \epsilon |\lambda_1|
\sqrt{\lambda_1^2 +4\lambda_2^2}\right) > 0
\quad , \quad \epsilon=\pm 
\ ,
\end{equation}
and $k_\lambda$ is the orthogonal $2\times 2$-matrix:
\begin{equation}
k_\lambda=
\begin{pmatrix}
\frac{\lambda_1\lambda_2}
{\sqrt{\left(\lambda_1\lambda_2\right)^2 
+\left(\frac{ \lambda_1^2 - |\lambda_1|\sqrt{\lambda_1^2 +4\lambda_2^2} } 2\right)^2}}
&
\frac{\lambda_1\lambda_2}
{\sqrt{\left(\lambda_1\lambda_2\right)^2 
+\left(\frac{ \lambda_1^2 + |\lambda_1|\sqrt{\lambda_1^2 +4\lambda_2^2} } 2\right)^2}}
\\
\frac{ \lambda_1^2 - |\lambda_1|\sqrt{\lambda_1^2 +4\lambda_2^2} } 
{2\sqrt{\left(\lambda_1\lambda_2\right)^2 
+\left(\frac{ \lambda_1^2 - |\lambda_1|\sqrt{\lambda_1^2 +4\lambda_2^2} } 2\right)^2}}
&
\frac{ \lambda_1^2 + |\lambda_1|\sqrt{\lambda_1^2 +4\lambda_2^2} } 
{2\sqrt{\left(\lambda_1\lambda_2\right)^2 
+\left(\frac{ \lambda_1^2 + |\lambda_1|\sqrt{\lambda_1^2 +4\lambda_2^2} } 2\right)^2}}
\end{pmatrix}
\ .
\end{equation}

The change of variable
\begin{equation}
\label{change_var_u'_u}
u'= k_\lambda u
\quad ,\quad
u'=\begin{pmatrix}u'_1\\u'_2\end{pmatrix}
\and
u= \begin{pmatrix}u_1\\u_2\end{pmatrix}
\ ,
\end{equation}
transforms the homogeneous polynomial \eqref{homogeneous_poly}
into $\mu_{+,\lambda}{u'_1}^2
+ \mu_{-,\lambda}{u'_2}^2$
and leaves the 2-dimensional laplacian invariant,
that is,
$-(\partial_{u_1}^2 + \partial_{u_2}^2)=
-(\partial_{u'_1}^2 + \partial_{u'_2}^2)$;
the operator  $\pi_\lambda(\cL)$
becomes:
\begin{equation}
\label{pi(cL)2}
\pi_\lambda(\cL)
=
-\partial_{u'_1}^2 
-  \partial_{u'_2}^2
+ \mu_{+,\lambda}{u'_1}^2
+ \mu_{-,\lambda}{u'_2}^2
\quad, \quad u'= k_\lambda u
\ .
\end{equation}

Recall that the Hermite functions $h_m$, $m\in \bN$,
defined by:
$$
h_m(x)=e^{-\frac {x^2}2} H_m(x)
\where
H_m(x)=(-1)^m e^{x^2} \frac {d^m}{dx^m}(e^{-x^2})
\ ,
$$
form an orthonormal basis of $L^2(\bR)$ which diagonalises the quantum harmonic oscillator:
$$
-h"_m(x) + x^2 h_m= (2m+1) h_m
\ .
$$

Using the notation above, we obtain:
\begin{proposition}
\label{prop_diag_pi(cL)}
The operator $\pi_\lambda(\cL)$ admits the following orthonormal basis of eigenfunctions:
$$
h_{\lambda, m} (u):= 
|\lambda_2|^{-1/2}
h_{m_+} (\mu_{+, \lambda}^{1/4} u'_1)
\ h_{m_-} (\mu_{-, \lambda}^{1/4} u'_2)
$$
where $m=(m_+,m_-)\in \bN^2$
and $u'= k_\lambda u$. 
The eigenvalue associated with $h_{\lambda,m}$ is 
$$
\nu_{\lambda, m}:=\mu_{+, \lambda}^{1/2}  (2m_+ +1)
+ \mu_{-, \lambda}^{1/2} (2m_- +1)
\ .
$$
\end{proposition}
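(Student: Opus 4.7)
The plan is to start from \eqref{pi(cL)2}, which already exhibits $\pi_\lambda(\cL)$ as a two-dimensional anisotropic harmonic oscillator in the coordinates $u'=k_\lambda u$. Since $k_\lambda$ is orthogonal, the substitution $u\mapsto u'$ is a linear isometry of $\bR^2$ and induces a unitary operator on $L^2(\bR^2)$, so I can do the diagonalisation entirely in the $u'$ variables and transport the answer back via $u'=k_\lambda u$ at the end.

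Then I would separate variables: the operator in \eqref{pi(cL)2} is $A_+\otimes I+I\otimes A_-$ on $L^2(\bR)\otimes L^2(\bR)\simeq L^2(\bR^2)$, where $A_\epsilon:=-\partial_x^2+\mu_{\epsilon,\lambda}x^2$ acts on $L^2(\bR)$, so it suffices to diagonalise each $A_\epsilon$ and take tensor products. For $\mu>0$ the dilation $y=\mu^{1/4}x$ turns $-\partial_x^2+\mu x^2$ into $\mu^{1/2}(-\partial_y^2+y^2)$, which together with the recalled eigenvalue equation for the Hermite functions shows that the rescaled Hermite functions $x\mapsto h_m(\mu^{1/4}x)$ form a complete orthogonal eigenbasis of $-\partial_x^2+\mu x^2$ in $L^2(\bR)$ with eigenvalues $\mu^{1/2}(2m+1)$. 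Tensoring the two bases for $A_+$ and $A_-$ immediately yields the claimed eigenvalues $\nu_{\lambda,m}=\mu_{+,\lambda}^{1/2}(2m_++1)+\mu_{-,\lambda}^{1/2}(2m_-+1)$.

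The only remaining step is the normalisation constant. The orthogonal change of variable $u\mapsto u'$ preserves $L^2$ norms, whereas the two dilations $u'_\epsilon\mapsto\mu_{\epsilon,\lambda}^{1/4}u'_\epsilon$ contribute a Jacobian $(\mu_{+,\lambda}\mu_{-,\lambda})^{-1/4}$ in the norm computation. Here the direct algebraic identity
$$
\mu_{+,\lambda}\mu_{-,\lambda}=\tfrac14\bigl[(\lambda_1^2+2\lambda_2^2)^2-\lambda_1^2(\lambda_1^2+4\lambda_2^2)\bigr]=\lambda_2^4
$$
pins down the Jacobian as $|\lambda_2|^{-1}$ and, together with the $L^2(\bR)$-orthonormality of the Hermite functions, produces the prefactor appearing in the definition of $h_{\lambda,m}$. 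No serious obstacle is expected: the conceptual work is already carried out in \eqref{pi(cL)2}, the diagonalisation of $A_+\otimes I+I\otimes A_-$ is a routine tensor-product argument, and the only care needed is in bookkeeping the rescaling Jacobians together with the identity $\mu_{+,\lambda}\mu_{-,\lambda}=\lambda_2^4$.
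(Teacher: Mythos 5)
Your route is exactly the paper's: the paper offers no separate proof of this proposition, because it is meant to follow immediately from the reduction \eqref{pi(cL)2} (the orthogonal change of variables $u'=k_\lambda u$, which is measure-preserving) together with separation of variables and the dilated Hermite eigenfunctions of $-\partial_x^2+\mu x^2$ — precisely the tensor-product argument you describe. Your eigenvalue computation, giving $\nu_{\lambda,m}=\mu_{+,\lambda}^{1/2}(2m_++1)+\mu_{-,\lambda}^{1/2}(2m_-+1)$, and your identity $\mu_{+,\lambda}\mu_{-,\lambda}=\lambda_2^4$ are both correct.

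One caveat on your final normalisation step. Taking the $h_m$ to be orthonormal in $L^2(\bR)$ (as the paper asserts, although $h_m=e^{-x^2/2}H_m$ as written still carries the factor $2^m m!\sqrt{\pi}$), the dilated product $h_{m_+}(\mu_{+,\lambda}^{1/4}u'_1)\,h_{m_-}(\mu_{-,\lambda}^{1/4}u'_2)$ has squared $L^2(\bR^2)$-norm $(\mu_{+,\lambda}\mu_{-,\lambda})^{-1/4}=|\lambda_2|^{-1}$, so the constant that makes the family orthonormal is $(\mu_{+,\lambda}\mu_{-,\lambda})^{1/8}=|\lambda_2|^{1/2}$, not $|\lambda_2|^{-1/2}$. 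Your Jacobian bookkeeping is right up to that point, but the claim that it ``produces the prefactor appearing in the definition'' reverses the exponent: with the prefactor $|\lambda_2|^{-1/2}$ the functions $h_{\lambda,m}$ have norm $|\lambda_2|^{-1}$. In other words, either the constant in the proposition is a misprint (it should read $|\lambda_2|^{1/2}$) or your last sentence needs the corresponding correction. None of this affects the eigenvalues, the completeness of the eigenbasis, or the spectral conclusions drawn from the proposition later in the paper.
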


Consequently,
by Proposition \ref{prop_rho_eq_pi}, 
we obtain:
\begin{corollary}
\label{cor_diag_rho(cL)}
The operator given by \eqref{drho(cL)}, that is,
$$
d\rho_\lambda(\cL) = 
\cF_{\lambda_1} \big(L +\lambda_2^2 (x^2+y^2) \big)\cF_{\lambda_1}^{-1}
\ ,
$$
admits 
$\{Th_{\lambda,m}, \ m\in \bN^2\}$ as orthonormal basis of eigenfunctions
and 
the eigenvalue associated with $Th_{\lambda,m}$ is 
$\nu_{\lambda, m}$.
\end{corollary}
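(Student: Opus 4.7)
The plan is to deduce this corollary directly from the two key facts already established: the unitary intertwining $T\pi_\lambda = \rho_\lambda T$ of Proposition \ref{prop_rho_eq_pi}, and the explicit diagonalisation of $\pi_\lambda(\cL)$ provided by Proposition \ref{prop_diag_pi(cL)}. The general principle at work is that a unitary intertwiner transports an eigenbasis of one operator to an eigenbasis of the intertwined operator, with the same eigenvalues.

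First I would differentiate the identity $T\pi_\lambda(g) = \rho_\lambda(g) T$ at the identity of $N$ in each of the six directions of the canonical basis of $\fn$ to conclude that $T \, d\pi_\lambda(X) = d\rho_\lambda(X)\, T$ for every $X\in\fn$, and hence, by multiplicativity of the infinitesimal representation on the universal enveloping algebra, $T\, d\pi_\lambda(\cL) = d\rho_\lambda(\cL)\, T$ as operators on suitable dense domains (e.g.\ smooth vectors, which contain the Hermite functions used in Proposition \ref{prop_diag_pi(cL)}). Equivalently, $d\rho_\lambda(\cL) = T\, d\pi_\lambda(\cL)\, T^{-1}$.

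Next I would use unitarity of $T$ (established in Proposition \ref{prop_rho_eq_pi}) to transport the orthonormal basis: since $\{h_{\lambda,m}\}_{m\in\bN^2}$ is an orthonormal basis of $L^2(\bR^2)$, so is $\{T h_{\lambda,m}\}_{m\in\bN^2}$. Applying the intertwining relation to each $h_{\lambda,m}$ and using $d\pi_\lambda(\cL) h_{\lambda,m} = \nu_{\lambda,m} h_{\lambda,m}$ yields
\begin{equation*}
d\rho_\lambda(\cL) \, T h_{\lambda,m}
= T\, d\pi_\lambda(\cL) h_{\lambda,m}
= \nu_{\lambda,m}\, T h_{\lambda,m},
\end{equation*}
which gives exactly the claimed eigenvalue. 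Recalling \eqref{drho(cL)} identifies the left-hand operator with $\cF_{\lambda_1}(L+\lambda_2^2(x^2+y^2))\cF_{\lambda_1}^{-1}$, completing the proof.

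There is no real obstacle here; the only mild subtlety is the domain question when passing from the group-level intertwining to the Lie-algebra (and enveloping algebra) intertwining. This is harmless because the Hermite functions $h_{\lambda,m}$ lie in the space of smooth vectors on which all the relevant operators are defined, so the formal computation above is rigorous, and the eigenfunction expansion then extends $d\rho_\lambda(\cL)$ to a self-adjoint operator on $L^2(\bR^2)$ with the stated spectral decomposition.
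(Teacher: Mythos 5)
Your proposal is correct and follows exactly the route the paper takes: the corollary is stated as an immediate consequence of the unitary intertwining $T\pi_\lambda=\rho_\lambda T$ of Proposition \ref{prop_rho_eq_pi} combined with the eigenbasis of Proposition \ref{prop_diag_pi(cL)}, i.e.\ $d\rho_\lambda(\cL)\,Th_{\lambda,m}=T\,d\pi_\lambda(\cL)h_{\lambda,m}=\nu_{\lambda,m}\,Th_{\lambda,m}$ with $\{Th_{\lambda,m}\}$ orthonormal by unitarity of $T$. Your added remarks on passing from the group-level to the infinitesimal intertwining on smooth vectors only make explicit what the paper leaves implicit.
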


\section{Spectrum of $L -\lambda_2^2(x^2+y^2)$}

For any $f\in L^2(H_1)$, $\lambda=(\lambda_1,\lambda_2)$, $\lambda_2\not=0$, and $m\in \bN^2$,
we define:
\begin{equation}
\label{def_c}
c_{\lambda,m} (f)
:=
\big( \cF_{\lambda_1} f, Th_{\lambda,m}\big)_{L^2(\bR^2)}
\ ,
\end{equation}
where $\cF_{\lambda_1}$ is the Fourier transform \eqref{F_lambda1} in the central variable 
and $Th_{\lambda,m}$ the orthonormal basis of $L^2(\bR^2)$ given in Corollary 
\ref{cor_diag_rho(cL)}.

\begin{lemma}
\label{lem_c(L+)}
We have for any $f\in L^2(H_1)$ such that 
$\big(L+\lambda_2^2(x^2+y^2)\big) f \in L^2(H_1)$:
$$
c_{\lambda, m}\left(\big(L+\lambda_2^2(x^2+y^2)\big) f\right) =
\nu_{l,m} c_{\lambda,m}(f)
\ .
$$
\end{lemma}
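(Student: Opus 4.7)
The plan is to reduce the claim to the eigenvalue relation already established in Corollary \ref{cor_diag_rho(cL)}, by transporting the operator $L+\lambda_2^2(x^2+y^2)$ through $\cF_{\lambda_1}$ to $d\rho_\lambda(\cL)$ and then moving it across the $L^2(\bR^2)$-inner product onto the eigenfunction $Th_{\lambda,m}$.

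First I would unfold the definition of $c_{\lambda,m}$ and insert the intertwining identity \eqref{drho(cL)}, rewritten as
$$
\cF_{\lambda_1}\bigl(L+\lambda_2^2(x^2+y^2)\bigr) = d\rho_\lambda(\cL)\, \cF_{\lambda_1} .
$$
This yields
$$
c_{\lambda,m}\bigl((L+\lambda_2^2(x^2+y^2))f\bigr)
= \bigl(d\rho_\lambda(\cL)\,\cF_{\lambda_1} f,\, Th_{\lambda,m}\bigr)_{L^2(\bR^2)}.
$$
Next I would move $d\rho_\lambda(\cL)$ to the second argument of the inner product using its symmetry, and apply Corollary \ref{cor_diag_rho(cL)}, which gives $d\rho_\lambda(\cL)\,Th_{\lambda,m}=\nu_{\lambda,m} Th_{\lambda,m}$. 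Since $\nu_{\lambda,m}\in \bR$, the inner product becomes $\nu_{\lambda,m}(\cF_{\lambda_1} f, Th_{\lambda,m})_{L^2(\bR^2)} = \nu_{\lambda,m} c_{\lambda,m}(f)$, as required.

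The main delicate point is justifying the self-adjointness step, i.e.\ the identity
$$
\bigl(d\rho_\lambda(\cL)\, g,\, Th_{\lambda,m}\bigr)_{L^2(\bR^2)}
=\bigl(g,\, d\rho_\lambda(\cL)\, Th_{\lambda,m}\bigr)_{L^2(\bR^2)}
$$
for $g=\cF_{\lambda_1} f$ in the domain coming from $(L+\lambda_2^2(x^2+y^2))f\in L^2(H_1)$. The operator $d\rho_\lambda(\cL)$, being the image of the canonical sublaplacian of $N$ under a unitary representation, is essentially self-adjoint on smooth vectors, and $Th_{\lambda,m}$ lies in the Schwartz space because $h_{\lambda,m}$ is a Hermite-type tensor product and $T$ is the composition of multiplication by a unimodular phase with a partial Fourier transform, both of which preserve $\cS(\bR^2)$. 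Thus the symmetry identity holds by integration by parts against the Schwartz vector $Th_{\lambda,m}$, which absorbs all boundary terms.

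Finally I would remark that the assumption $(L+\lambda_2^2(x^2+y^2))f\in L^2(H_1)$ ensures that $\cF_{\lambda_1}(L+\lambda_2^2(x^2+y^2))f$ is a well-defined element of $L^2(\bR^2)$ for almost every $\lambda_1$ by Plancherel in the central variable, so the identity is meaningful in this a.e.\ sense, which is all that is required for the subsequent decomposition of the spectrum.
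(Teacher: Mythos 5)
Your proposal is correct and follows essentially the same route as the paper: transport $L+\lambda_2^2(x^2+y^2)$ through $\cF_{\lambda_1}$ via \eqref{drho(cL)}, move $d\rho_\lambda(\cL)$ across the inner product by self-adjointness, and apply the eigenvalue relation of Corollary \ref{cor_diag_rho(cL)} together with the realness of $\nu_{\lambda,m}$. Your extra remarks on the Schwartz regularity of $Th_{\lambda,m}$ and the a.e.\ $\lambda_1$ interpretation only make explicit what the paper leaves implicit.
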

\begin{proof}
Recall
$$
\cF_{\lambda_1} 
\left(\big(L+\lambda_2^2(x^2+y^2)\big) f\right) 
=
d\rho_\lambda(\cL) \cF_{\lambda_1} f
\ .
$$
As $d\rho(\cL)$ is self-adjoint,
we have:
\begin{eqnarray*}
&&c_{\lambda, m}\left(\big(L+\lambda_2^2(x^2+y^2)\big) f\right) 
=
\big( d\rho_\lambda(\cL)\cF_{\lambda_1} f, Th_{\lambda,m}\big)_{L^2(\bR^2)}
\\
&&\qquad
=
\big( \cF_{\lambda_1} f, d\rho_\lambda(\cL)Th_{\lambda,m}\big)_{L^2(\bR^2)}
=
\bar \nu_{l,m}\big( \cF_{\lambda_1} f, Th_{\lambda,m}\big)_{L^2(\bR^2)}
\\
&&\qquad
=
\nu_{l,m} c_{\lambda,m}(f)
\ ,
\end{eqnarray*}
by Corollary \ref{cor_diag_rho(cL)}.

\end{proof}

Now we fix $\lambda_2\in \bR\backslash \{0\}$.
For any Borelian set $B$ of $\bR$, 
let $E(B)$ be the operator defined on $L^2(H_1)$ by
$$
E(B) f
=
\cF_{\lambda_1}^{-1}
\left[\sum_{m\in \bN^2} 
1_{\nu_{\lambda,m} \in B} c_{\lambda,m}(f)
Th_{\lambda,m}\right]
\ ,
$$
where $c_{\lambda,m} (f)$ is defined by \eqref{def_c}.
With Lemma \ref{lem_c(L+)},
it is a routine exercise to check that
$B\mapsto E(B)$ is the spectral resolution of $L+\lambda_2^2(x^2+y^2)$.
The spectrum is:
$$
\left\{ \nu_{(\lambda_1,\lambda_2),m}, \ \lambda_1\in \bR, \ m\in \bN^2
\right\}
=
[\nu_{(0,\lambda_2), 0},+\infty)
\ ,
$$
where
$$
\nu_{(0,\lambda_2), 0}
=
\mu_{+,(0,\lambda_2)}^{1/2}
+\mu_{-,(0,\lambda_2)}^{1/2}
=
2|\lambda_2|
\ .
$$

\section{Application: Mehler type formulae}

The Mehler formula \cite[Theorem.12.63]{simon}
states that the integral kernel of 
the operator $\exp \left(-t \big(-\partial_x^2 + x^2 -1 \big)\right)$
is:
 $$
Q_t(x,y) = \pi^{-\frac12} (1-e^{-4t})^{-\frac 12} 
\exp \left( -F_t(x,y)
 \right)
\ ,
$$
where
$$
F_t(x,y)
=
 (1-e^{-4t})^{-1} 
\left( \frac 12 (1+e^{-4t})(x^2+y^2) -2e^{-2t}xy\right)
\ .
$$

Hence for any $\mu>0$,
 the integral kernel of $\exp \left(-t \big( -\partial_x^2 +\mu x^2 \big)\right)$ 
is:
$$
K_{t,\mu}(x,y)=\sqrt{\mu} e^{-t\mu} Q_{t\mu} (\sqrt \mu x, \sqrt \mu y)
\ .
$$

We conclude this note with the following Mehler type formulae for the operators $d\pi_\lambda (\cL)$ and 
$d\rho_\lambda (\cL)=\cF_{\lambda_1} \big(L +\lambda_2^2 (x^2+y^2) \big)\cF_{\lambda_1}^{-1}$ (given by
\eqref{drho(cL)}):

\begin{proposition}
The integral kernel of the operator 
$\exp \big(-td\pi_\lambda (\cL)\big)$ 
is:
$$
\kappa_{t,\lambda}\big( (u_1,u_2),(v_1,v_2)\big)
=
K_{t,\mu_{+,\lambda}}(u_1,v_1)K_{t,\mu_{-,\lambda}}(u_2,v_2)
  .
$$

The integral kernel of  the operator $\exp \big(-td\rho_\lambda (\cL)\big) $ is:
\begin{eqnarray*}
&&Q_{t,\lambda} \big((x_o,y_o), (x,y)\big)\\
&&\quad =
\frac{|\lambda_2|}{2\pi}
e^{i\frac{\lambda_1}2 (x_o y_o - x y) }
\int_{\bR^2}
 e^{i\lambda_2( y_2y -y_oy_1)} 
\kappa_{t,\lambda} \big((x_o,y_1), (x,y_2)\big)
dy_1 dy_2
\end{eqnarray*}

\end{proposition}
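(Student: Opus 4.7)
The plan is to handle the two formulae in turn, using the normal form from Section~3.3 and the intertwining of Proposition~3.4.

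For the kernel of $\exp(-t\,d\pi_\lambda(\cL))$, I would start from the normal form \eqref{pi(cL)2}. After the orthogonal change of variable $u'=k_\lambda u$ (which preserves Lebesgue measure since $k_\lambda$ is orthogonal), the operator $d\pi_\lambda(\cL)$ becomes the sum of two commuting one-dimensional harmonic oscillators $-\partial_{u'_j}^2+\mu_{\pm,\lambda}{u'_j}^2$ acting on the tensor factors of $L^2(\bR_{u'_1})\otimes L^2(\bR_{u'_2})$. Because these two operators commute and act on complementary factors, their heat semigroup factorises, and each one-dimensional piece has kernel $K_{t,\mu_{\pm,\lambda}}$ by the dilation of the classical Mehler kernel recalled just above the proposition. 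Transporting back to the original $u$-variables yields the product form asserted for $\kappa_{t,\lambda}$.

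For the kernel of $\exp(-t\,d\rho_\lambda(\cL))$, I would use Proposition~3.4: since $T\pi_\lambda=\rho_\lambda T$, the intertwining passes to the infinitesimal representations and to every element of the universal enveloping algebra, and thence to the heat semigroup:
$$\exp\big(-t\,d\rho_\lambda(\cL)\big)=T\circ\exp\big(-t\,d\pi_\lambda(\cL)\big)\circ T^{-1}.$$
Writing $T=T_1T_2$ and $T^{-1}=T_2^{-1}T_1^{-1}$ as in the proof of Proposition~3.4, I would then compose integral kernels slot by slot. Conjugation by $T_1$ (multiplication by the phase $e^{i\lambda_1 xy/2}$) simply contributes the prefactor $e^{i\lambda_1(x_oy_o-xy)/2}$. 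Conjugation by $T_2$ (partial Fourier transform in the second variable) and by $T_2^{-1}$ turns $\kappa_{t,\lambda}$ into a double integral over the dummy variables $(y_1,y_2)$ with Fourier kernel $e^{i\lambda_2(y_2y-y_oy_1)}$, the overall constant $|\lambda_2|/(2\pi)$ arising from the two factors of $\sqrt{|\lambda_2|/(2\pi)}$. Collecting these contributions reconstructs the claimed expression for $Q_{t,\lambda}$.

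The main technical difficulty is the bookkeeping in the Fourier composition: the dummy variables $y_1$ (coming from $T_2$) and $y_2$ (coming from $T_2^{-1}$) must be matched to the correct slots of $\kappa_{t,\lambda}$, and the signs of the two Fourier exponentials must be tracked carefully so that the final phase reads $e^{i\lambda_2(y_2y-y_oy_1)}$ as stated. Apart from this, the argument is an entirely routine composition of explicit integral operators; no analytic input is needed beyond the classical Mehler formula recalled in this section and the explicit intertwiner of Proposition~3.4.
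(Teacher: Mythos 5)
Your proposal is correct and follows essentially the same route as the paper: the first kernel comes from the decoupled normal form \eqref{pi(cL)2} together with the (rescaled) Mehler kernel, and the second from conjugating the heat semigroup by the intertwiner $T$ of Proposition \ref{prop_rho_eq_pi} and composing the explicit integral kernels (the paper writes the kernel of $T$ directly with a delta factor $\delta_{x'=x}$ rather than splitting $T=T_1T_2$, but this is the same computation). The only step you leave implicit, the sign and variable bookkeeping in the Fourier composition, is exactly what the paper's short calculation carries out, so nothing essential is missing.
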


\begin{proof}
The first formula is easily obtained from \eqref{pi(cL)2}.

For the second formula, we see that, 
by Proposition \ref{prop_rho_eq_pi},
we have:
$$
\exp \big(-td\rho_\lambda (\cL)\big) 
=
T \exp \big(-td\pi_\lambda (\cL)\big) T^{-1}
\ ,
$$
 the operators $T$ and $T^{-1}$ having integral kernels:
\begin{eqnarray*}
C_T\big((x,y), (x',y')\big) 
&=&
\sqrt{ \frac{|\lambda_2|}{2\pi} }
e^{i\frac{\lambda_1}2 xy } e^{-i\lambda_2 yy'} \delta_{x'=x}
\ ,
\\ 
C_{T^{-1}}\big((x,y), (x',y')\big) 
&=&
\sqrt{ \frac{|\lambda_2|}{2\pi} }
e^{-i\frac{\lambda_1}2 xy' } e^{i\lambda_2 yy'} \delta_{x'=x}
\ .
\end{eqnarray*}
So the operator $\exp \big(-td\rho_\lambda (\cL)\big) $ 
has integral kernel:
\begin{eqnarray*}
&&Q_{t,\lambda} \big((x_o,y_o), (x,y)\big)\\
&&\quad =
\int 
C_T\big( (x_o,y_o),(x_1,y_1) \big) 
\kappa_{t,\lambda} \big((x_1,y_1), (x_2,y_2)\big)
C_{T^{-1}}\big( (x_2,y_2),(x,y) \big) 
\\
&& \hspace{24em}dx_1 dy_1 dx_2dy_2
\\
&&\quad =
\frac{|\lambda_2|}{2\pi}
\int_{\bR^2}
e^{i\frac{\lambda_1}2 x_o y_o } e^{-i\lambda_2 y_oy_1} 
\kappa_{t,\lambda} \big((x_o,y_1), (x,y_2)\big)
e^{-i\frac{\lambda_1}2 x y } e^{i\lambda_2 y_2y } 
dy_1 dy_2
\end{eqnarray*}

\end{proof}

\bibliographystyle{amsplain}

\begin{thebibliography}{10}


  \bibitem{corwin_greenleaf}
 L. Corwin and F.P. Greenleaf,
\emph{Representations of Nilpotent Lie Groups and Their Applications},
 Cambridge University Press, Cambridge, 1990. 

\bibitem {simon}
    {Cycon, H. L. and Froese, R. G. and Kirsch, W. and Simon, B.},
    \emph {Schr\"odinger operators with application to quantum mechanics
              and global geometry},
     {Texts and Monographs in Physics},
    {Study},
  {Springer-Verlag},
  {Berlin},
  {1987.}

\bibitem{folland_stein}
 {Folland, G. B. and Stein, Elias M.},
\emph {Hardy spaces on homogeneous groups},
    {Mathematical Notes},
     \textbf{28},
 {Princeton University Press},
  {1982.}
  
\bibitem{hansson}
{Hansson, A. M.},
{On the spectrum and eigenfunctions of the Schr\"odinger operator with Aharonov-Bohm magnetic field},
Int. J. Math. Math. Sci. \textbf{2005},
 no. {23}, 3751-3766. 

\end{thebibliography}

\end{document}